
\documentclass{birkjour_t2}
%
%
%
 \newtheorem{thm}{Theorem}[section]
 
 \newtheorem{lem}[thm]{Lemma}
 \newtheorem{prop}[thm]{Proposition}
 \theoremstyle{definition}
 
 \theoremstyle{remark}

 \numberwithin{equation}{section}

\begin{document}

%
%
%
%
%
%
%
%
%

\title[Lower bounds]{Some lower bounds for the maximal number of  $A$-singularities in algebraic surfaces}

\author[J.G.Escudero]{Juan Garc\'{\i}a Escudero}
\address{%
Madrid, Spain
}
\email{jjgemplubg@gmail.com}
\keywords{singularities, algebraic surfaces}

\subjclass{14J17, 14J70}

\date{April,  2024}
\dedicatory{}

\begin{abstract}
We construct  algebraic surfaces with a large number of type $A$ singularities. Bivariate polynomials presented in previous works for the construction of nodal surfaces and certain families of Belyi polynomials are used. In some cases explicit expressions in terms of classical Jacobi polynomials are obtained.

\end{abstract}
\maketitle
\section{Introduction}
\bigskip\par
In \cite{esc13} a family of degree $3m$ polynomials was presented with the aim of improving the lower bounds for the maximal number of nodes or $A_{1}$ singularities in degree $3m$  complex algebraic surfaces. Besides the classical Cayley cubic \cite{ cay69} and Kummer quartic \cite{kum64} surfaces, previous lower bounds were obtained in  \cite{ tog40, chm92, bar96,end97, sar01,lab06b} and  upper bounds in \cite{ bea82, var83, miy84, jaf97} (some images of low degree surfaces with many nodes can be seen in \cite{bae16}). 
 \par
We denote by $\mu_{A_{\nu}}(d)$ ($\mu^{(\Bbb{R})}_{A_{\nu}}(d)$) the maximal number of $A_{\nu}$ complex (real) singularities for a degree $d$ surface in the complex (real) projective space  $\mathbb{P}^{3}({\Bbb{C}})$ ($\mathbb{P}^{3}({\Bbb{R}})$). General lower bounds for the number of nodes on degree $d$ complex algebraic surfaces were given in \cite{chm92}. The affine equations describing the surfaces consist of the sum of the classical univariate Chebyshev polynomials and the bivariate folding polynomials studied in \cite{wit88, hof88}. By considering a different class of bivariate polynomials, which were motivated by previous studies on substitution tilings constructed with the help of certain deltoid tangents, new lower bounds for nodes were obtained in \cite{esc13}. In \cite{esc16} we showed that there are real surfaces with the same number of real singularities as those studied in \cite{esc13}. The surfaces can also be defined over the rationals (or over the integers, after clearing the denominators) as shown in \cite{esc18} and the lower bounds are
\begin{equation} 
\mu^{(\Bbb{R})}_{A_{1}}(3m)\ge \frac{3m (3m-1)}{ 2}\lfloor\frac{3m}{2}\rfloor+\left(3m(m-1)+1\right)\lfloor \frac{3m-1}{2}\rfloor
\end{equation}
with explicit equations for the surfaces.
 \par
With the purpose of getting surfaces with many  $A_{\nu}$ singularities with $\nu>1$, a certain class of Belyi polynomials (used instead of Chebyshev polynomials) together with the folding polynomials were studied in \cite{lab06}. By using other types of Belyi polynomials in addition to the bivariate polynomials presented in \cite{esc13}, hypersurfaces with many non-nodal singularities were also constructed in \cite{esc14a, esc14b}. For cusps or $A_{2}$ singularities, we studied in \cite{esc14a} several special cases improving existing lower bounds and the results were extended in (\cite{esc14b}, Prop. 3.4), where we obtained: 
\begin{equation}
\mu_{A_{2}}(3m)\ge \frac{3m^{2}(3m-1)}{2}+(3m(m-1)+1)\lfloor \frac{m-1}{2}\rfloor
\end{equation}
An explicit equation for $d=9$ with 127 complex cusps is given in (\cite{esc14a}, Prop. 2).
 \par
In the cases of singularities of type $A_{\nu}, \nu>2$ some results for low degrees studied in \cite{esc14a} were also extended in (\cite{esc14b}, Prop. 3.6) giving:
\begin{equation} 
\mu_{A_{3m+1}}(3(2m+1))\ge 3m(10m+7)+4
\end{equation}
In particular we showed that there are explicit equations for real surfaces of degree 9 with 55 real  $A_{4}$ singularities (\cite{esc14a}, Fig. 2(c)) and degree 15 with 166 real  $A_{7}$ singularities (\cite{esc14b}, Fig. 3).
 \par
In this work we generalise the results given in \cite{esc14a, esc14b}. In order to investigate the existence of surfaces with many singularities, we use the family of bivariate polynomials  ${\mathcal{J}}$ defined over ${\Bbb{Q}}$ considered in \cite{esc18} in conjunction with several families of Belyi polynomials, which we study in Section 2.  In Section 3 we first recall the results about nodal hypersurfaces obtained with ${\mathcal{J}}$ in \cite{esc18}, and then we construct families of surfaces with many $A_\nu$ singularities, improving the corresponding lower bounds given in  \cite{lab06}. We also get explicit expressions in terms of Jacobi polynomials for some cases in Section 4.
 
 \section{Belyi polynomials}
\bigskip\par
If $w_{0}$ is a zero of a polynomial $P(w), w \in {\Bbb{C}}$, with critical value $\zeta=P(w_{0})$, then the order of a zero $w_{0}$ of $\frac{dP(w)}{dw}$ is called its multipicity $\nu$ (all the derivatives of $P(w)$ up to order $\nu$ vanish at $w_{0}$). A univariate polynomial with no more than two different critical values is called a Belyi polynomial. A graph without cycles (plane tree) with a bicoloring for the vertices is used to represent a Belyi polynomial whose critical points have the multiplicities given by the number of edges adjacent to the vertices minus one \cite{adr98, lab06}. The degree of a vertex is the number of edges incident to it. A leaf vertex is a vertex with degree one. Black and white vertices represent critical points with critical value $\zeta=-1$ and $\zeta=1$ respectively. Well known examples of Belyi polynomials are the degree $d$ Chebyshev polynomials of the first kind $T_{d}(w)$. The tree corresponding to $T_{9}(w)$ is shown in Fig.1, where the two leaf vertices correspond to non-critical points and the eight degree 2 vertices represent critical points with multiplicity 1.
 \par
We use the notation  ${\mathcal{B}}_{d,\nu, \epsilon}(w)$ for a degree $d$ Belyi  polynomial having  critical points $w_{1}, w_{2},... w_{n}$ of multiplicity $\nu$ with critical value $\zeta=-1$, one critical point $w_{0}$ of multiplicity $\nu$ with $\zeta=1$, and one additional critical point $w_{u}$ with multiplicity $\epsilon$ and $\zeta=-1$. The polynomials are then solutions of 

$$\frac{d {\mathcal{B}}_{d,\nu,\epsilon}(w)}{dw}=(w-w_{0})^{\nu}(w-w_{u})^{\epsilon}\prod_{l=1}^{n}(w-w_{l})^{\nu}$$
 \par\noindent
with ${\mathcal{B}}_{d,\nu,\epsilon}(w_{l})=-1, l=1,2,...,n; {\mathcal{B}}_{d,\nu,\epsilon}(w_{0})=1$ and  ${\mathcal{B}}_{d,\nu,\epsilon}(w_{u})=-1$ (the computation of some Belyi polynomials has been done in \cite{lab06, esc14a} by choosing one critical point equal to zero). 
 \par
When there is no critical point with multiplicity $\epsilon$ and $\zeta=-1$ we denote the polynomial  either by  ${\mathcal{B}}_{d,\nu,0}(w)$ or ${\mathcal{B}}_{d,\nu}(w)$. We also write ${\mathcal{B}}[k_{1},k_{2}...k_{l}]$ if the polynomial can be characterised by the parameters $k_{1},k_{2}...k_{l}$ as  in this Section final remarks.

         \begin{figure}[h]
 \includegraphics[width=20pc]{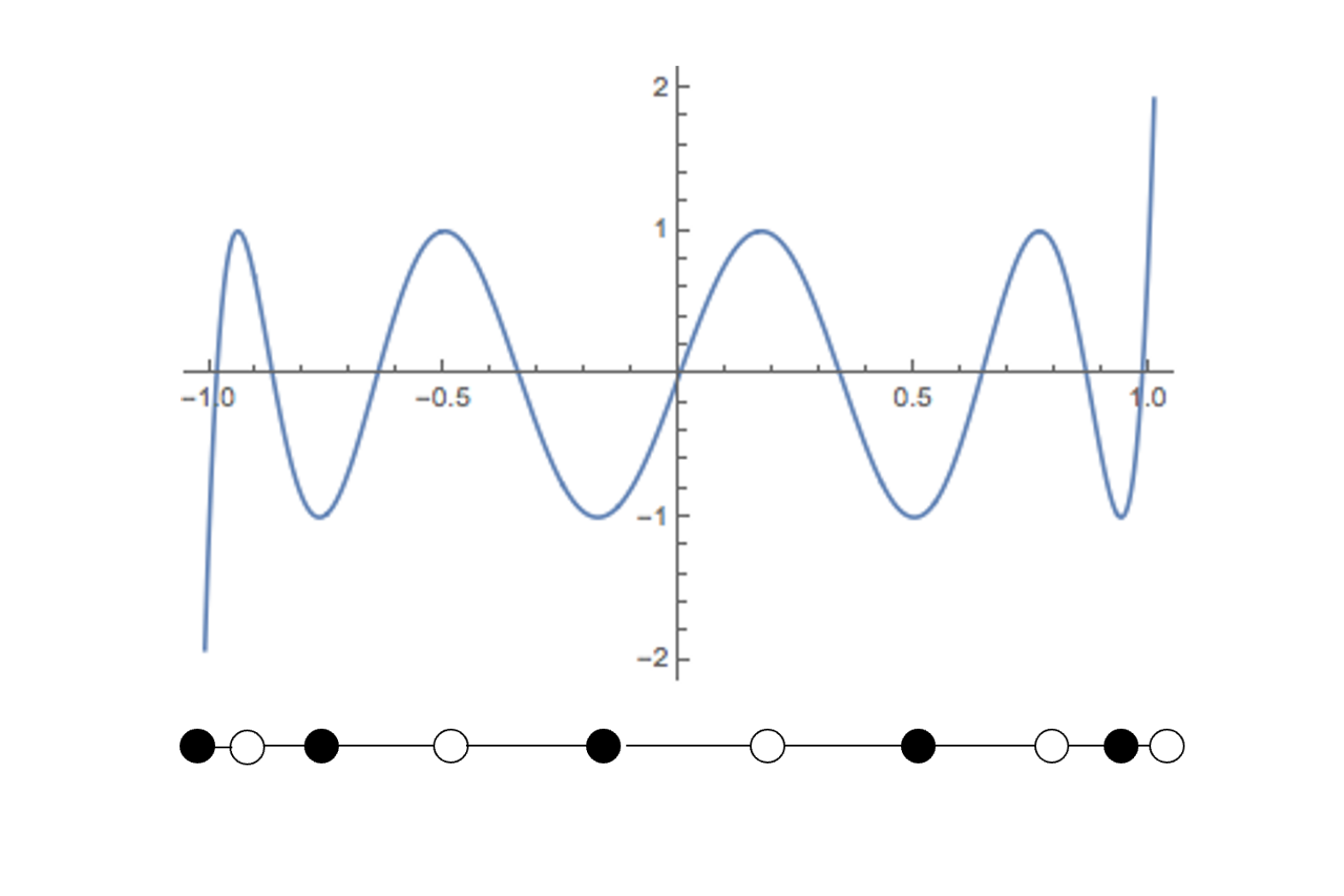}
\caption{\label{label} Chebyshev polynomial $T_{9}(w)$ (top) and its associated plane tree (bottom).}
\end{figure}   

 \begin{lem} 
There exist polynomials ${\mathcal{B}}^{(1)}_{d,\nu}(w)$, also denoted by ${\mathcal{B}}^{(1)}[n,m]$, with $k-1$ critical points of multiplicity $\nu$ with critical value $\zeta=-1$ and one critical point of multiplicity $\nu$ with critical value $\zeta=1$, where $k=3m+1, m\in {\Bbb{Z}}^{+}$ and
\begin{equation} 
d=k\nu  +1, \nu=3n+k-2, n \in {\Bbb{Z}}^{\geq 0}
\end{equation}
 \end{lem}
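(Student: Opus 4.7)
The plan is to establish existence via the classical Shabat--Voevodsky correspondence between bicolored plane trees (dessins d'enfants) and Belyi polynomials: every connected bicolored plane tree with $d$ edges is realised as the dessin of a degree-$d$ polynomial whose critical values lie in $\{-1,+1\}$, the multiplicity of a critical (or regular) point being the degree of the corresponding vertex minus one. Hence to prove the lemma it suffices to display a bicolored plane tree with the prescribed degree sequence.

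First I would verify the numerology. For $d=k\nu+1$ the derivative has degree $k\nu$, which is precisely the total order of vanishing demanded by $k$ critical points each of multiplicity $\nu$. In the corresponding tree, the unique critical point with $\zeta=+1$ is a white vertex of degree $\nu+1$, and the other white vertices are leaves (simple roots of $\mathcal{B}^{(1)}_{d,\nu}(w)-1$). Since the white degrees sum to the number of edges $d$, the number of white leaves must be $\ell_W=d-(\nu+1)=(k-1)\nu$. On the black side, the $k-1$ critical points with $\zeta=-1$ give non-leaf vertices of degree $\nu+1$, so the number of black leaves is $\ell_B=d-(k-1)(\nu+1)=\nu-k+2=3n$, using the hypothesis $\nu=3n+k-2$. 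Crucially, $\ell_B\ge 0$ exactly because $n\in\mathbb{Z}^{\geq 0}$.

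An explicit tree with this vertex profile is then immediate. Place the unique non-leaf white vertex $w^{\star}$ at the centre and attach to it $\nu+1=(k-1)+3n$ edges, of which $k-1$ go to the non-leaf black vertices and the remaining $3n$ go to black leaves. Next, attach $\nu$ fresh white leaves to each of the $k-1$ non-leaf black vertices, producing $(k-1)\nu$ white leaves in total. The result is a connected bicolored plane tree with the correct numbers and degrees of vertices, so the Shabat--Voevodsky correspondence yields a Belyi polynomial $\mathcal{B}^{(1)}_{d,\nu}(w)\in\mathbb{C}[w]$ realising it.

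The only delicate point in this plan is the combinatorial non-negativity $\ell_B=3n\ge 0$, which is built into the hypothesis $\nu=3n+k-2$; once that is in place, the construction is purely combinatorial and the existence statement follows. Obtaining explicit closed-form expressions (and, in particular, rationality) is a separate problem not addressed by this approach and, as the introduction announces, is taken up in Section 4 via classical Jacobi polynomials.
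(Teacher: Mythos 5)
Your proof is correct and takes essentially the same approach as the paper: both exhibit the same bicolored plane tree (a central white vertex of degree $\nu+1$ joined to $k-1$ black vertices of degree $\nu+1$ and to $3n$ black leaves, each non-leaf black vertex carrying $\nu$ white leaves) and appeal to the tree--Belyi-polynomial correspondence. The only difference is presentational: you describe the tree in closed form with an explicit leaf count and name the Shabat correspondence, whereas the paper builds the same tree iteratively from base cases and leaves the correspondence implicit.
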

  \begin{proof}

  \par

The starting point, for $m=1$, is a plane tree made up of a central white vertex, 3 black vertices  and additional white vertices connected with the black ones (see Fig. 2(a) (left)). This corresponds to $ k=4, n=0$ in Eq. (2.1).  We then successively add $3n\in {\Bbb{Z}}^{+}$ edges (see the case $n=1$  in Fig. 2(a) (right)) connected to vertices with opposite colour (in Fig. 2(a) (right) we only show one edge with a number indicating the number of additional adjacent edges and omit 3 black and 9 white vertices). We get the trees associated to the series ${\mathcal{B}}^{(1)}_{12n+9,3n+2}(w), n \in {\Bbb{Z}}^{\geq 0}$.
  \par
  In order to construct the initial trees for each of the remaining series, which correspond to $n=0$ in Eq. (2.1), we add $k-1=3m, m=2,3,4,...$ edges to the central white vertex in Fig. 2(a) (left). In this way we obtain the trees for ${\mathcal{B}}^{(1)}_{(k-1)^2,k-2}(w)$. The case  $m=2$ is shown in Fig. 2(b) (left). 
   \par 
  We use the trees of ${\mathcal{B}}^{(1)}_{(k-1)^2,k-2}(w)$ as initial trees of the series obtained in the following way: for each value of $k$ we add $3n$ edges to each vertex in the corresponding tree, as in the $m=1$ case, and we get the series 
 
$${\mathcal{B}}^{(1)}_{(3(n+m)-1)(3m+1) +1,3(n+m)-1}(w), n\in {\Bbb{Z}}^{\geq 0},  m\in {\Bbb{Z}}^{+}$$
  In Fig. 2(b) (right) we represent  the next step  ($k=7$), which corresponds to the series ${\mathcal{B}}^{(1)}_{21n+36,3n+5}(w)$, $n\in {\Bbb{Z}}^{+}$.

         \end{proof}
         
         \begin{figure}[h]
 \includegraphics[width=30pc]{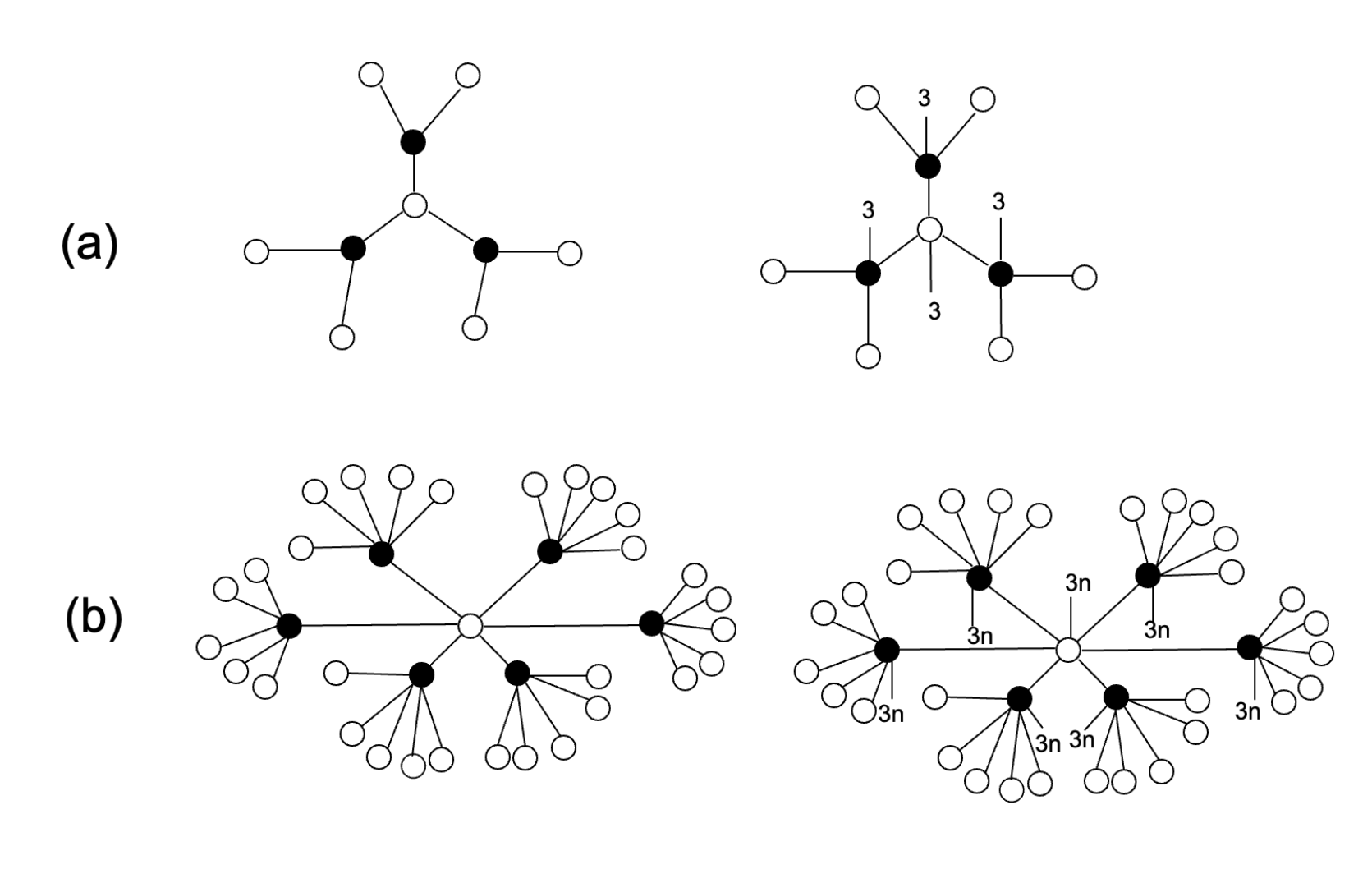}
\caption{\label{label}Plane trees for ${\mathcal{B}}^{(1)}_{d,\nu}(w)$: (a) $d=12n+9, \nu=3n+2$, $n=0$ (left),  $n=1$ (right), (b) $d=21n+36, \nu=3n+5$, $n=0$ (left), $n>0$ (right)}
\end{figure}   
         
    \par
     \begin{lem} 
There exist polynomials ${\mathcal{B}}^{(2)}_{d,\nu, \epsilon}(w)$ $({\mathcal{B}}^{(2)}[j,n,m,l])$  with $b-1$ critical points of multiplicity $\nu$ with critical value $\zeta=-1$, one critical point of multiplicity $\nu$ with $\zeta=1$ and one additional critical point of multiplicity $\epsilon$ with $\zeta=-1$ where
\begin{equation} 
d=b\nu+a, \nu=3n+b-1,\epsilon=a-1, n\in {\Bbb{Z}}^{\geq 0}
\end{equation}
with $a= 3m+j, b=3l+j+1$, $j\in\{0,1\}$, $m\in {\Bbb{Z}}^{+}$ if $j=0$, $m\in {\Bbb{Z}}^{\geq 0}$ if $j=1$, and $l=m, m+1, m+2,...$. 
 \end{lem}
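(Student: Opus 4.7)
The plan is to mirror the strategy of the previous lemma: for each admissible tuple $(j,n,m,l)$ I would build a bicolored plane tree whose vertex degrees realise the prescribed critical multiplicities, and then invoke the standard correspondence between plane trees and Belyi polynomials with two critical values (\cite{adr98}) to extract ${\mathcal{B}}^{(2)}_{d,\nu,\epsilon}(w)$. The existence of the polynomial therefore reduces to the combinatorial existence of a plane tree with the correct degree sequence, where the degree at each vertex equals the local degree of the covering at the corresponding preimage.

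To see that this degree sequence is feasible, let $W_0$ and $B_0$ denote the numbers of white and black leaves (non-critical preimages of $1$ and $-1$). Summing local degrees above each critical value gives
\[
W_0 = d-(\nu+1) = (b-1)\nu+a-1, \qquad B_0 = d-(b-1)(\nu+1)-a = \nu-b+1 = 3n,
\]
both non-negative for every $n\ge 0$. For the base case $n=0$, so that $\nu=b-1$, I would take a central white vertex of degree $b$, attach to it the $b-1$ regular critical black vertices (each of multiplicity $\nu$) together with the extra critical black vertex (of multiplicity $\epsilon=a-1$), and hang $\nu$ white leaves off each of the former and $\epsilon$ white leaves off the latter; a direct count gives $b+(b-1)\nu+\epsilon = b\nu+a = d$ edges and matches $W_0$, $B_0=0$. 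The two cases $j\in\{0,1\}$ are handled by the same picture, differing only in the parities of $a$ and $b$.

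For $n\ge 1$ I would grow this base tree in the same way as in the previous lemma, by attaching $3n$ additional leaves to every critical vertex of target multiplicity $\nu=3n+b-1$: this produces $3n$ extra black leaves around the central white vertex (matching $B_0=3n$) and $3n$ extra white leaves around each of the $b-1$ regular critical black vertices, while the extra critical black vertex of fixed multiplicity $\epsilon$ is left untouched. The resulting tree has $d = b\nu + a$ edges and the required degree sequence. The main obstacle is not existence itself but the bookkeeping needed to cover the full range of $(j,n,m,l)$ coherently: the hypothesis $l\ge m$ translates, already at $n=0$, into $\nu\ge\epsilon$, which prevents the extra critical point from taking on the largest multiplicity and so keeps the family genuinely distinct from the ${\mathcal{B}}^{(1)}$-series of the previous lemma; once this verification is in place, the plane-tree construction delivers the desired polynomials.
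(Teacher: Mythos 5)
Your construction is correct and follows essentially the same route as the paper: an explicit base tree for $n=0$ (central white vertex of degree $b$, $b-1$ regular black vertices of multiplicity $\nu_0=b-1$, and the distinguished black vertex $u$ of multiplicity $\epsilon=a-1$), grown by attaching $3n$ edges of opposite colour to every non-leaf vertex except $u$, with the plane-tree/Belyi correspondence supplying the polynomial. Your leaf counts $W_0=(b-1)\nu+a-1$ and $B_0=3n$ are a useful sanity check that the paper omits, but the argument is otherwise the same.
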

  \begin{proof}

  \par
The initial trees are generated as follows.
 \bigskip\par \noindent 
  (1) $j=0$.  For  $m\in {\Bbb{Z}}^{+}$ and $l=m, m+1, m+2,...$ the polynomials have trees made of a central white vertex with $b-1=3l$ edges connected to black vertices, each one connected to another $3l$ white vertices. In addition the central white vertex is connected to an additional black vertex $u$ which has $a-1=3m-1$ edges connected to white vertices. The corresponding Belyi polynomials are ${\mathcal{B}}^{(2)}_{3m+3l(3l+1), 3l, 3m-1}(w)$. The tree for $m=1, l=1$ is represented in Fig. 3(a) (left). 
     \par \noindent
  (2) $j=1$. For $m\in {\Bbb{Z}}^{\geq 0}, l=m, m+1, m+2,...$ there is a tree made up of a central white vertex with $b-1=3l+1$ edges connected to black vertices, and each black vertex is connected to $b$ white vertices. The central white vertex is also connected to an additional black vertex $u$ which has $a-1=3m$ edges connected to additional white vertices. The corresponding Belyi polynomials are
  ${\mathcal{B}}^{(2)}_{3m+1+(3l+1)(3l+2), 3l+1, 3m}(w)$. The tree for $m=0, l=1$ is represented in Fig. 3(b) (left). 
 \bigskip\par 
  Therefore the initial trees are described by 
  \begin{equation} 
d_{0}=b\nu_{0}+a, \nu_{0}=b-1,\epsilon_{0}=a-1 
\end{equation}
where $a= 3m+j, b=3l+j+1$, $j=0,1$, $m\in {\Bbb{Z}}^{+}$ if $j=0$, $m\in {\Bbb{Z}}^{\geq 0}$ if $j=1$, $l=m, m+1, m+2,...$. 
        \par
The trees in (1) and (2) are then used as initial trees for the following series.  For each tree in (1) and (2) we construct a series of trees by adding $3n$ edges connected to vertices of opposite color for all the non leaf vertices with the exception of $u$, which is the only black vertex that remains unchanged in the whole process. The two series we obtain in this way are:
 \bigskip\par \noindent 
  (3)  $j=0$, $d=(3n+3l)(3l+1)+3m,  \nu=3n+3l, \epsilon=\epsilon_{0}=2, n\in {\Bbb{Z}}^{\geq 0}$. In Fig. 3(a) (right) we represent the case $m=1, l=1$: $d=12n+15, \nu=3n+3, n\in {\Bbb{Z}}^{+}$.
        \par \noindent
  (4)  $j=1$, $d=(3n+3l+1)(3l+2)+3m+1,  \nu=3n+3l+1, \epsilon=\epsilon_{0}=0, n\in {\Bbb{Z}}^{\geq 0}$. For $m=0, l=0$ we have the series corresponding to the bounds of Eq. (1.3). In Fig. 3(b) (right) we see the trees for the series ${\mathcal{B}}^{(2)}_{15n+21,3n+4,0}(w), n\in {\Bbb{Z}}^{+}$  for $m=0, l=1$.
          
   \end{proof}

   \begin{figure}[h]
 \includegraphics[width=30pc]{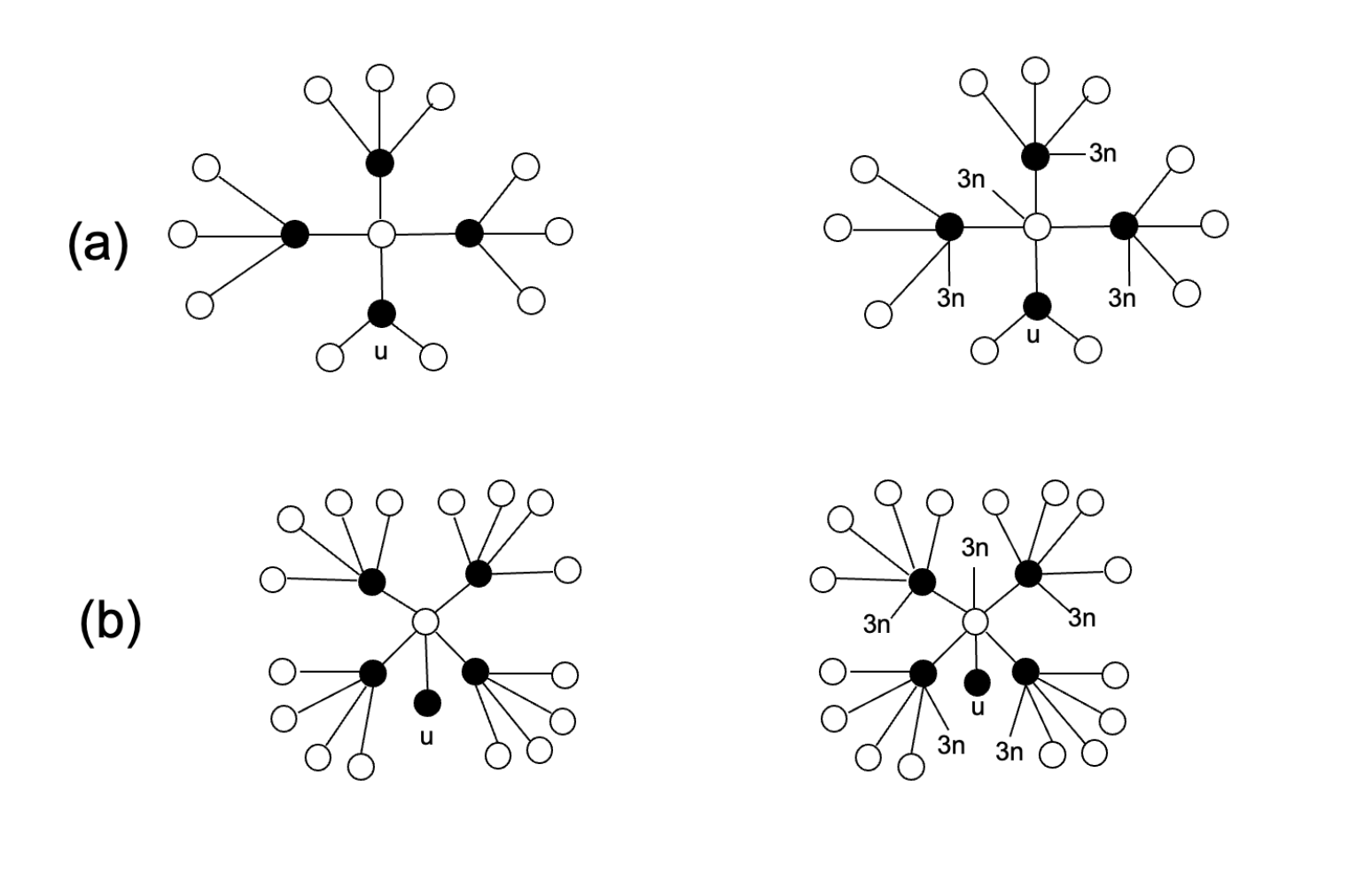}
\caption{\label{label}Plane trees for ${\mathcal{B}}^{(2)}_{d,\nu, \epsilon}(w)$: (a) $d=12n+15, \nu=3n+3, \epsilon=2$, $n=0$  (left)  and $n>0$ (right), (b)   $d=15n+21, \nu=3n+4, \epsilon=0$, $n=0$  (left)  and $n>0$ (right)}
\end{figure}

    \begin{lem} There exist polynomials ${\mathcal{B}}^{(3)}_{d,\nu, \epsilon}(w)$ $({\mathcal{B}}^{(3)}[x,p,n,m,l])$ with $p-1$ critical points of multiplicity $\nu=x+3n+p-1$ with critical value $\zeta=-1$, one critical point of multiplicity $\nu=x+3n+p-1$ with critical value $\zeta=1$, and one additional critical point with multiplicity $\epsilon=z$ and $\zeta=-1$ in the following cases 
 \bigskip\par \noindent
     (a) $x=1$: 
  \par
  (a1) $p=3m-1, d=3(np+m(3m-2)+l+1), \nu=3(n+m)-1, z=3l+1$
  \par
    (a2) $p=3m, d=3(np+3m^2+l+1), \nu=3(n+m),  z=3l+2$ 
  \par
   (a3)  $p=3m+1, d=3(np+m(3m+2)+l+1), \nu=3(n+m)+1, z=3l+1$ 
\par\noindent
  (b) $x=2$:
  \par
  (b1) $p=3m-1, d=3(np+m(3m-1)+l+1), \nu=3(n+m), z=3l+2$ 
\par
 (b2)  $p=3m, d=3(np+m(3m+1)+l+1), \nu=3(n+m)+1,  z=3l+2$  
\par
 (b3) $p=3m+1, d=3(np+3m(m+1)+l+1), \nu=3(n+m)+2,  z=3l$
\par\noindent
  (c) $x=3$: 
    \par
    (c1)  $p=3m-1, d=3(np+3m^2+l), \nu=3(n+m)+1,  z=3l$
\par
    (c2)  $p=3m, d=3(np+m(3m+2)+l+1), \nu=3(n+m)+2,  z=3l+2$ 
\par
    (c3)  $p=3m+1, d=3(np+(3m+1)(m+1)+l+1), \nu=3(n+m+1),  z=3l+2$
 \bigskip\par \noindent
  where $m\in {\Bbb{Z}}^{+}$, $p\geq 4$, $n\in {\Bbb{Z}}^{\geq 0}$,  $l=0$ if $m=1$ and $l=0,1,2,...m-2$ if $m \geq  2$.

 \end{lem}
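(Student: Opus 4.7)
The plan is to proceed exactly as in Lemmas 2.1 and 2.2: for each of the nine sub-cases (a1)--(c3) I will exhibit a bicolored plane tree for the base parameter $n=0$, and then produce the full family by the inflation step of attaching $3n$ edges at every non-leaf vertex except the distinguished black vertex $u$, which remains inert throughout the process. Existence of the Belyi polynomial then follows from the correspondence between bicolored plane trees and Belyi polynomials used implicitly in Section 2, under which the black (resp.\ white) vertex degrees record the multiplicities of the critical points above $\zeta=-1$ (resp.\ $\zeta=+1$) and the total degree of the polynomial equals the number of edges plus one.

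The base tree I have in mind has a central white vertex $c$ joined to $p-1$ black vertices $v_{1},\ldots,v_{p-1}$ and to the special black vertex $u$. Each $v_{i}$ carries $\nu-1$ white leaves so that $\deg v_{i}=\nu+1$ (yielding a critical point of multiplicity $\nu$ with value $\zeta=-1$); $u$ carries $z$ white leaves (yielding a critical point of multiplicity $z$ with value $\zeta=-1$); and $c$ is given $x-1$ additional white leaves so that $\deg c=(p-1)+1+(x-1)=p+x-1=\nu+1$ at $n=0$, thereby assigning it a critical point of multiplicity $\nu$ with value $\zeta=+1$. The parameter $x\in\{1,2,3\}$ thus encodes exactly the residue-class shift of $\nu$ modulo $3$, while $l$ controls $z$ within its residue class; the constraint $l\le m-2$ (with $l=0$ forced when $m=1$) ensures that $z$ stays in the range compatible with the total edge count $d-1=p\nu+z$.

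The inflation step increases $\nu$ by $3$ at each iteration while preserving $\epsilon=z$, yielding after $n$ steps the stated multiplicity $\nu=x+3n+p-1$. The degree formula $d=p\nu+z+1$ is then a direct edge count, and its equivalence with each of the expressions in (a1)--(c3) reduces to a one-line algebraic check; for example in case (a1) one has $p\nu+z+1=(3m-1)(3(n+m)-1)+(3l+1)+1=3\bigl(np+m(3m-2)+l+1\bigr)$, matching $d$, and analogous identities hold in the remaining eight sub-cases.

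The principal obstacle is the case bookkeeping: while the inflation step is uniform across the nine sub-cases, the placement of the $x-1$ extra white leaves on $c$ and of the $z$ leaves on $u$, as well as the precise distribution of the $3n$ new edges during inflation, must respect the bipartite structure of the tree and must not cause $u$'s multiplicity to grow. Checking case by case that the constraints $p\ge 4$, $m\in{\Bbb Z}^{+}$, and $0\le l\le m-2$ (or $l=0$ when $m=1$) are exactly what is needed to keep every vertex degree non-negative and every multiplicity correct is the most delicate part of the argument.
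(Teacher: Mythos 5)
Your overall strategy coincides with the paper's: exhibit a base bicolored tree for $n=0$ and then inflate by attaching $3n$ pendant edges at every non-leaf vertex except $u$, so that $\nu$ grows by $3n$ and $d$ by $3np$ while $\epsilon=z$ is frozen. However, your description of the base tree contains an off-by-one error in two places, and as written it does not produce a polynomial of the stated degree or with the stated multiplicities. For a vertex to be a critical point of multiplicity $\nu$ its degree must be $\nu+1$ (edges adjacent minus one), so each $v_i$ must carry $\nu$ white leaves, not $\nu-1$; and the central white vertex must carry $x$ extra pendant edges, not $x-1$, to reach degree $(p-1)+1+x=\nu_0+1$ with $\nu_0=x+p-1$. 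Moreover those extra pendant vertices must be \emph{black} leaves (non-critical preimages of $-1$), not white ones, or the bicoloring is violated. With your counts the tree has $(p+x-1)+(p-1)(\nu-1)+z=p\nu+z+1-p$ edges, i.e.\ it represents a polynomial of degree $d-p$, contradicting your own (correct) identity $d=p\nu+z+1$. The corrected configuration is exactly the paper's: $d_0=x+p+(p-1)\nu_0+z$, with $x$ black leaves at the centre; once this is fixed, your algebraic verification of the nine parameter identities goes through.

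A second, smaller point: the paper does not obtain the sub-case list (a1)--(c3) and the restriction $0\le l\le m-2$ from any requirement that the tree exist (a tree of the above shape exists for essentially arbitrary $x,p,y,z$). They are the solutions of the two conditions actually needed in Section 3, namely $3\mid d$ and $\lfloor d/(\nu+1)\rfloor=\lfloor (d-1)/\nu\rfloor-1=p-1$; the bound on $l$ keeps $z$ small enough relative to $\nu$ for the first floor identity to hold. Your attribution of the constraint on $l$ to ``keeping every vertex degree non-negative'' is therefore not the right explanation, although for the bare existence statement of the lemma this only affects the motivation, not the validity, of the construction.
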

  \begin{proof}
  The initial trees have the form given in Fig. 4 (left), which correspond to ${\mathcal{B}}^{(3)}_{d_{0},\nu_{0}, \epsilon_{0}}(w)$ with 
  $d_{0}=x+p+(p-1)y+z,\nu_{0}=y,\epsilon_{0}=z$. We look for the values of $d_{0}=3q$ and $\nu_{0}$ satisfying $\lfloor\frac{d_{0}}{\nu_{0}+1}\rfloor=\lfloor\frac{d_{0}-1}{\nu_{0}}\rfloor-1=p-1$ (see Section 3). We find the following solutions for $x$ and $z$ ($p\geq 4, m\in {\Bbb{Z}}^{+}, l=0$ if $m \leq 2$ and $l=0,1,2,...m-2$ if $m > 2$)
 \bigskip\par \noindent  
    (a) $x=1$: 
  \par
  (a1) $p=3m-1, q=m(3m-2)+l+1, \nu_{0}=3m-1, z=3l+1$
  \par
    (a2) $p=3m, q=3m^2+l+1, \nu_{0}=3m,  z=3l+2$ 
  \par
   (a3)  $p=3m+1, q=m(3m+2)+l+1, \nu_{0}=3m+1,  z=3l+1$ 
\par\noindent
  (b) $x=2$:
  \par
  (b1) $p=3m-1, q=m(3m-1)+l+1, \nu_{0}=3m,  z=3l+2$ 
\par
 (b2)  $p=3m, q=m(3m+1)+l+1, \nu_{0}=3m+1, z=3l+2$  
\par
 (b3) $p=3m+1, q=3m(m+1)+l+1, \nu_{0}=3m+2, z=3l$ 
\par\noindent
  (c) $x=3$: 
    \par
    (c1)  $p=3m-1, q=3m^2+l, \nu_{0}=3m+1, z=3l$
\par
    (c2)  $p=3m, q=m(3m+2)+l+1, \nu_{0}=3m+2, z=3l+2$ 
\par
    (c3)  $p=3m+1, q=(3m+1)(m+1)+l+1, \nu_{0}=3(m+1), z=3l+2$

 \bigskip\par 
 We now add $3n$ edges as in Lemma 2.2 and we get the series for ${\mathcal{B}}^{(3)}_{d,\nu, \epsilon}(w)$ with $d=d_{0}+3np, \nu=\nu_{0}+3n, \epsilon=\epsilon_{0}, n\in {\Bbb{Z}}^{\geq 0}$.  They satisfy $ \lfloor \frac{d}{\nu+1}\rfloor=p-1,  \lfloor \frac{d-1}{\nu}\rfloor=p$ (see Eq. (3.9)). In  Fig. 5(a),(b) we show two examples which correspond to $(x,z,y)=(1,1,4), (3,2,8)$ respectively: $d=12n+18, \nu=3n+4, n\in {\Bbb{Z}}^{\geq 0}, \epsilon=1$ (Fig. 5(a)) and  $d=18n+51, \nu=3n+8, n\in {\Bbb{Z}}^{\geq 0}, \epsilon=2$
(Fig. 5(b), where we have used the notation of Fig. 4).
   \end{proof}
   
      \begin{figure}[h]
 \includegraphics[width=30pc]{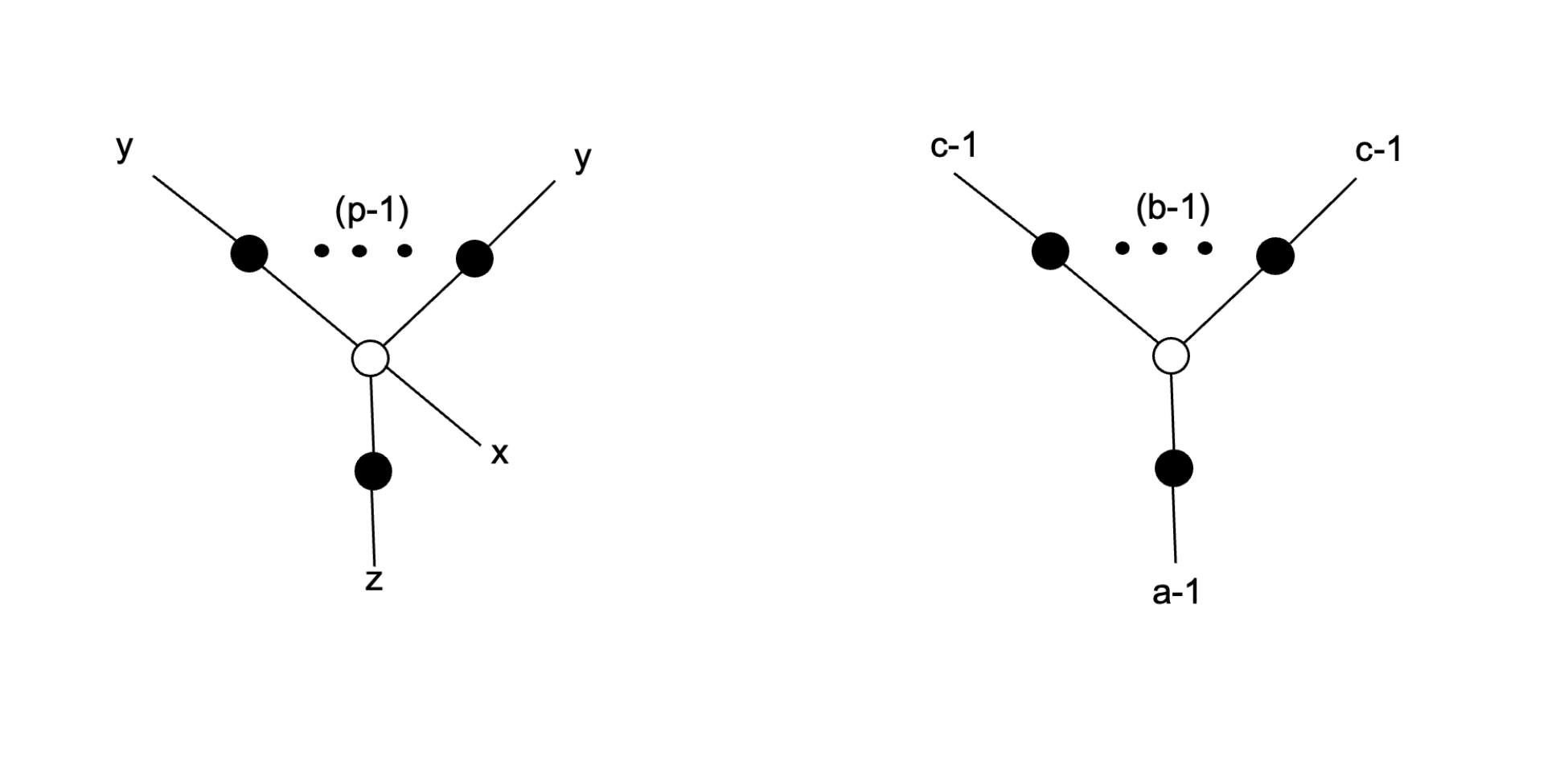}
\caption{\label{label}Plane trees for (left) Belyi polynomials in Lemma 2.3 with $n=0$; (right) Belyi polynomials ${\mathcal{B}}_{a +c(b-1),c-1, a-1}(w)$ with explicit equations given by $G_{a,b,c}(w)$. The number of edges with label $y$ or $c-1$ is indicated in brackets above the suspension points.}
\end{figure}

   \begin{figure}[h]
 \includegraphics[width=30pc]{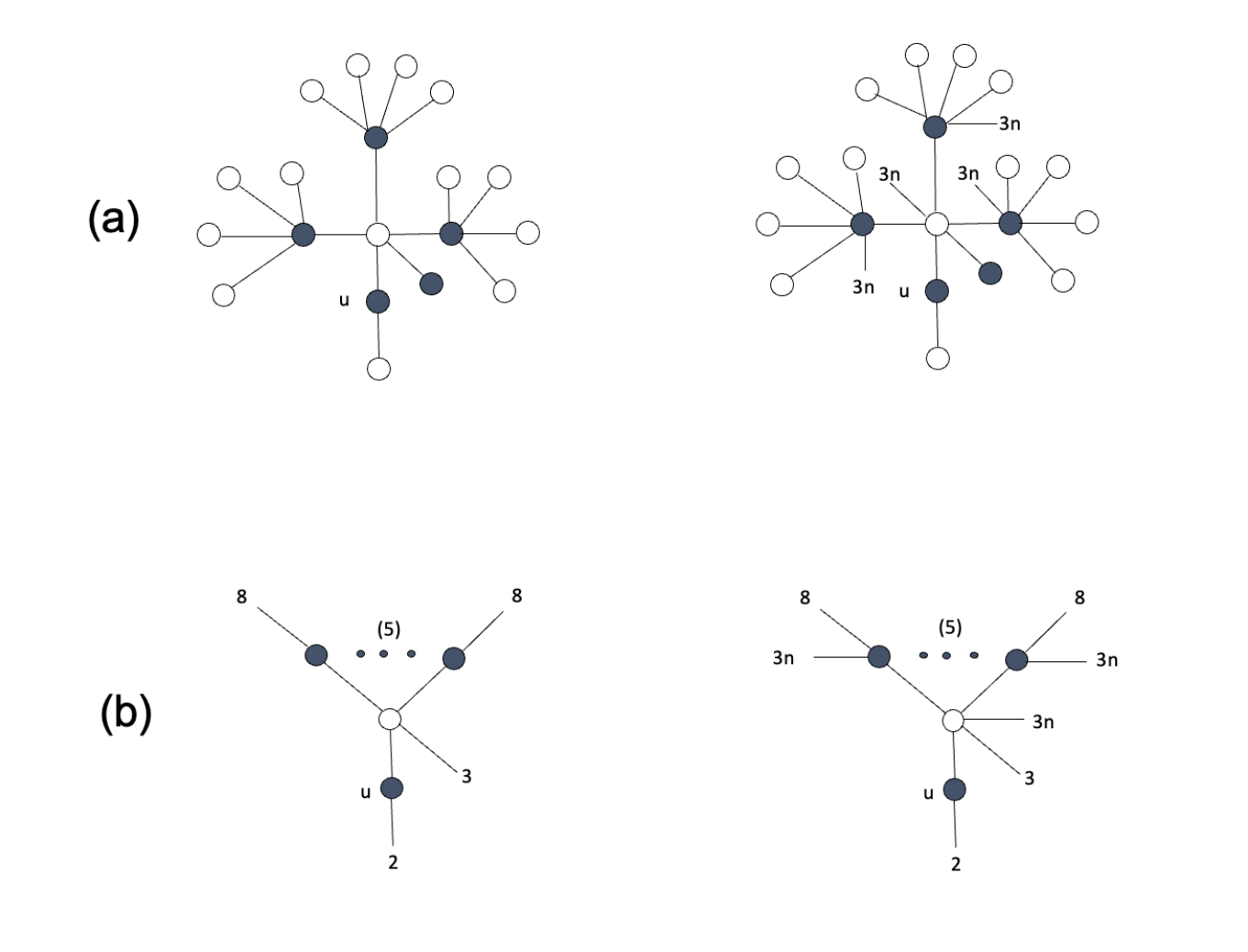}
\caption{\label{label}Plane trees for ${\mathcal{B}}^{(3)}_{d,\nu, \epsilon}(w)$: (a) $d=12n+18, \nu=3n+4, \epsilon=1$, $n=0$  (left)  and $n>0$ (right), (b) $d=18n+51,  \nu=3n+8, \epsilon=0$, $n=0$  (left)  and $n>0$ (right)}
\end{figure}

  The analysis of the associated trees shows that in certain cases some polynomials given in Lemma 2.3 coincide with some given in Lemmas 2.1 and 2.2. We now use the notation ${\mathcal{B}}^{(1)}[n,m]$, ${\mathcal{B}}^{(2)}[j,n,m,l]$ and ${\mathcal{B}}^{(3)}[x,p,n,m,l]$  to compare the polynomials. For Lemma $2.3$ $(x=2)$ and Lemma $2.1$ we have ${\mathcal{B}}^{(3)}[2, 3m+1,n,m,0]={\mathcal{B}}^{(1)}[n+1,m]$ with $n\in {\Bbb{Z}}^{\geq 0}$ and $m\in {\Bbb{Z}}^{+}$. As an example we can see that the tree in Fig.2(a) (right) is the same as the tree of the case $(x,z,y)=(2,0,5)$ in Lemma 2.3 with $d=12n+21,\nu=3n+5, \epsilon=0, n=0$, hence ${\mathcal{B}}^{(3)}[2, 4,0,1,0]={\mathcal{B}}^{(1)}[1,1]$. The cases with $l \neq 0$ in Lemma 2.3 are not included in Lemma 2.1 and the initial trees, associated to ${\mathcal{B}}^{(1)}[0,m]$ in Lemma 2.1, are not included in Lemma 2.3. The comparison between Lemma $2.3$ $(x=3)$ and Lemma $2.2$ gives ${\mathcal{B}}^{(3)}[3, 3m-1,n,m,l]={\mathcal{B}}^{(2)}[1,n+1,l,m-1]$ with $n\in {\Bbb{Z}}^{\geq 0}$, $m\in {\Bbb{Z}}^{\geq 2}$, $l \geq m+1$ and ${\mathcal{B}}^{(3)}[3, 3m+1,n,m,l]={\mathcal{B}}^{(2)}[0,n+1,l+1,m]$ for $n\in {\Bbb{Z}}^{\geq 1}$,  $m\in {\Bbb{Z}}^{\geq 1}$, $l \geq m$ if $m \neq 2$, $l \geq 3$ if $m = 2$. In addition to the initial trees, that correspond to ${\mathcal{B}}^{(2)}[0,0,m,l], {\mathcal{B}}^{(2)}[1,0,m,l]$, there are other cases in Lemma 2.2 which are not included in Lemma 2.3, like ${\mathcal{B}}^{(2)}[1,n,m,m]$ with $n\in {\Bbb{Z}}^{\geq 1}$, $m\in {\Bbb{Z}}^{\geq 0}$ and ${\mathcal{B}}^{(2)}[0,n,m,m]$ for $n\in {\Bbb{Z}}^{\geq 1}, m\in {\Bbb{Z}}^{\geq 2}$.

   \section{Surfaces with many $A$ singularities}
   The affine equations of the surfaces considered in this paper are obtained with the polynomials  ${\mathcal{B}}^{(t)}$ discussed in Section 2 and polynomials related to the family 
    \begin{equation} 
\hat{J}_{d,\tau}(x,y):=\lambda_{d,\tau}  \prod_{\mu} L_{d,\tau,\mu}\left(x,y\right)
   \end{equation} 
   where
$$L_{d,\tau,\mu}(x,y):=y+\left({\rm cos}\left(\frac{2\pi}{d}\left(\frac{6\mu-1}{6}-\frac{\tau}{\pi}\right)\right)-x\right){\rm tan}\left(\frac{\pi}{d}\left(\frac{6\mu-1}{6}-\frac{\tau}{\pi}\right)\right)+{\rm sin}\left(\frac{2\pi}{d}\left(\frac{6\mu-1}{6}-\frac{\tau}{\pi}\right)\right)$$ 
with $ \mu=- \lfloor \frac{d-2}{2} \rfloor,- \lfloor \frac{d-2}{2} \rfloor+1,- \lfloor \frac{d-2}{2} \rfloor+2,..., \lfloor \frac{d+1}{2} \rfloor, (x,y)\in {\Bbb{R}}^2$ and $\tau \in \Bbb{R}$.
  The parameters in Eq. (3.1) are $\lambda_{d,(6m-3d-1)\frac{\pi}{6}} =(-1)^{m}2 d$ and  $\lambda_{d,\tau}  = 2{\rm cos}(\tau+\frac{d\pi}{2}+\frac{2\pi}{3})$ if $\tau \neq (6m-3d-1)\frac{\pi}{6}$, $m \in \Bbb{Z}$ (the line $L_{d,\tau,\mu}(x,y)=0$ parallel to the $y$-axis is interpreted as the line $x+1=0$).  
  \par
The following result will be necessary in what follows (\cite{esc16}, Lemma 1)

    \begin{lem}     The real polynomial $\hat{J}_{d,0}(x,y)$ has ${d \choose 2}$ critical points with critical value $0$. The number of points with critical value $8$ is $\frac{d(d-3)}{6}$ if $d=0$ mod 3, and $\frac{(d-1)(d-2)}{6}$ otherwise. The number of critical points with critical value $-1$ is $\frac{d^{2}}{3}-d+1$ for $d=0$ mod 3, and $\frac{(d-1)(d-2)}{3}$ otherwise. 
     \end{lem}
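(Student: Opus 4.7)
The plan is to combine an analysis of the real line arrangement $\{L_{d,0,\mu}=0\}_\mu$ with the geometry of the deltoid to which these lines are tangent. The three critical values $0$, $8$, $-1$ will correspond respectively to multiple points of the arrangement, bounded cells lying inside the deltoid, and bounded cells lying outside it.

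Since $\hat{J}_{d,0}=\lambda_{d,0}\prod_\mu L_{d,0,\mu}$ is a product of $d$ linear forms, any intersection of two or more lines is automatically a critical point with value $0$. Conversely, at a point lying on exactly one line, the gradient is $\lambda_{d,0}\,\nabla L_{d,0,\mu_0}\cdot \prod_{\mu'\neq\mu_0}L_{d,0,\mu'}$, a nonzero multiple of $\nabla L_{d,0,\mu_0}$, so it cannot vanish. Critical points with value $0$ are therefore exactly the multiple points of the arrangement. A direct computation with the angular parameters $\frac{(6\mu-1)\pi}{6d}$ shows that no two lines are parallel and no three are concurrent, so these multiple points are exactly the ${d\choose 2}$ pairwise intersections.

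For the remaining critical points I pass to the bounded cells of $\Bbb{R}^2\setminus \bigcup_\mu\{L_{d,0,\mu}=0\}$. On the closure of each such cell $C$, the polynomial vanishes on $\partial C$, has constant sign in the interior, and is not identically zero, hence attains a nonzero extremum at an interior critical point; a local factorisation argument shows each bounded cell contributes exactly one such critical point. The non-nodal critical points are thus in bijection with bounded cells, and I must identify their extremal values. Using the three-fold rotational symmetry of the tangent family together with a trigonometric substitution of the type parametrising the deltoid $(x,y)=(2\cos\theta+\cos 2\theta,\,2\sin\theta-\sin 2\theta)$, one identifies $\hat{J}_{d,0}$ up to scalar with a bivariate folding polynomial of the kind used in \cite{wit88,hof88,chm92}. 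The normalisation $\lambda_{d,0}=2\cos(\tau+\frac{d\pi}{2}+\frac{2\pi}{3})|_{\tau=0}$ is chosen precisely so that the extremum is $8$ on cells inside the deltoid and $-1$ on cells outside; verifying this identification and pinning down the two constants is the step I expect to be the main obstacle.

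Finally, I count cells of each type by an Euler characteristic computation on the arrangement combined with the deltoid partition. With no parallel lines and no triple concurrences, the total number of bounded cells is $\frac{(d-1)(d-2)}{2}$ for every $d$. The three-fold symmetric deltoid induces a partition determined by how many cells lie wholly inside versus wholly outside its boundary; in the generic case $d\not\equiv 0\pmod 3$ this split is $1:2$, yielding $\frac{(d-1)(d-2)}{6}$ inside and $\frac{(d-1)(d-2)}{3}$ outside cells as claimed. When $d\equiv 0\pmod 3$, additional incidences (the three cusps of the deltoid align with preferred directions of the tangent family) shift the split to $\frac{d(d-3)}{6}$ inside and $\frac{d^2}{3}-d+1$ outside, which still sum to $\frac{(d-1)(d-2)}{2}$; a case-by-case verification of these two splits completes the count.
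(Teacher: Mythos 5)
You should first note that the paper itself offers no proof of this lemma: it is imported verbatim from \cite{esc16} (Lemma 1), so your attempt can only be judged on its own terms and against what the surrounding text reveals. The parts of your outline that concern the critical value $0$ and the ``one critical point per bounded cell'' claim are essentially sound and can be made rigorous: the value-$0$ critical points are exactly the double points of the arrangement, and the general-position claim is where the offset $\tfrac{6\mu-1}{6}$ earns its keep (three deltoid tangents at parameters $\theta_1,\theta_2,\theta_3$ concur precisely when $\theta_1+\theta_2+\theta_3\equiv 0$, and the offset makes $\sum\theta_{\mu_i}$ a half-integer multiple of $\tfrac{2\pi}{d}$, so no triple point can occur); your ``local factorisation argument'' is best replaced by the B\'ezout count $(d-1)^2=\binom{d}{2}+\binom{d-1}{2}$, which forces exactly one nondegenerate extremum in each of the $\binom{d-1}{2}$ bounded cells and no other critical points.

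The genuine gap is the step you yourself flag as the main obstacle, and your proposed route through it fails. The identification of $\hat{J}_{d,0}$ ``up to scalar with a bivariate folding polynomial'' cannot hold: the paper states in Section 3 that for $d\equiv 0\pmod 3$ the polynomial ${\mathcal{J}}_{d}$ has $\tfrac{d(d-3)}{3}+1$ critical points with value $-1$ while $F_{d}^{\bf{A}_{2}}$ has only $\tfrac{d(d-3)}{3}$ at the corresponding level (this is exactly the ``one more singularity'' that motivates the construction), and two polynomials with different critical-point counts on a level set are not affinely equivalent. What is true is that both arise by composing the same target function $2(\cos A+\cos B+\cos C)+2$, $A+B+C=0$ (whose only critical values are $8$, $0$, $-1$), with a multiplication-by-$d$ map on the torus, but with different translation offsets --- the $-\tfrac{1}{6}-\tfrac{\tau}{\pi}$ shift in $L_{d,\tau,\mu}$ --- and the counts in the lemma are counts of shifted lattice points in a fundamental domain, which is where the dependence on $d\bmod 3$ enters. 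Your ``inside versus outside the deltoid'' dichotomy is also not the right invariant: all non-nodal critical points are images of torus points under the parametrizing map, hence lie in the closed region bounded by the deltoid, so no bounded cell is ``wholly outside'' it; the $8$ versus $-1$ alternative is an arithmetic condition on the preimage lattice point, not a positional one. Finally, the $1\!:\!2$ split and its modification for $3\mid d$ --- which is the entire content of the lemma --- is asserted rather than derived, so even granting the framework the count is not established.
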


  \par
  In (\cite{esc18}, Lemma 4.2) we have shown that
     \begin{equation} 
      {\mathcal{J}}_{d}(x,y):= \hat{J}_{d,0}\left(x,\frac{y}{ \sqrt{3}}\right)
           \end{equation}      
are defined over ${\Bbb{Q}}$, and also that we can get the Chebyshev polynomials from 
     \begin{equation} 
      {\mathcal{J}}_{d}(z,0)=-2   T_{d}\left(\frac{z-1}{2}\right)+1
            \end{equation}  
     \par
There are families of degree $d$ nodal hypersurfaces, defined over ${\Bbb{Q}}$, with affine equations 
       \begin{equation} 
       {\mathcal{J}}_{d}(x,y)+\frac{1}{4}(3-{\mathcal{J}}_{d}(2z+1,0))=0 
          \end{equation} 
     with  $(x,y,z) \in {\Bbb{R}^3}$   and
           \begin{equation} 
       {\mathcal{J}}_{d}(x,y)-{\mathcal{J}}_{d}(z,w)=0
                 \end{equation}  
     with  $(x,y,z,w) \in {\Bbb{R}^4}$      
having many singularities as shown in (\cite{esc18}, Theorem 4.3).  
 \par
Nodal hypersurfaces obtained by generalising Eqs. (3.4), (3.5) have been studied from the point of view of their invariants and projective rigidity in \cite{esc24}. Related to ${\mathcal{J}}_{d}$ are also certain maximising curves \cite{esc25}. The proof of Theorem 4.3 in \cite{esc18} is based on the characterisation of the critical points of ${\mathcal{J}}_{d}$. This analysis, given in Lemma 3.1, together with the results in Section 2 leads to 

     \begin{prop} 
The degree $d=3q$ surfaces with affine equations  
           \begin{equation} 
{\mathcal{J}}_{d}(u,v)+\frac{{\mathcal{B}}^{(t)}_{d,\nu,\epsilon}(w)+1}{2}=0,
      \end{equation} 
 where $(u,v,w) \in {\Bbb{C}^3}$ and $s=k,b,p$ for $t=1,2,3$ respectively, have 
           \begin{equation} 
\frac{d(d-1)(s-1)}{2}+\frac{d(d-3)}{3}+1
           \end{equation} 
singularities of type $A_{\nu}$.
 \end{prop}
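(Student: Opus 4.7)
The plan is to exploit the additive separation of variables in
$$F(u,v,w)={\mathcal{J}}_{d}(u,v)+\frac{1}{2}\bigl({\mathcal{B}}^{(t)}_{d,\nu,\epsilon}(w)+1\bigr).$$
Since the first summand depends only on $(u,v)$ and the second only on $w$, the gradient $\nabla F$ vanishes precisely when $(u_0,v_0)$ is a critical point of ${\mathcal{J}}_{d}$ and $w_0$ is a critical point of ${\mathcal{B}}^{(t)}_{d,\nu,\epsilon}$; such a point $(u_0,v_0,w_0)$ lies on $\{F=0\}$ iff the two critical values sum to zero in the obvious way. Hence the singular set of the surface is in bijection with compatible pairs of critical points of the two factors, and the proof reduces to enumerating these pairs and identifying their analytic type.

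The first catalogue comes from Lemma 3.1. Since $d=3q\equiv 0\pmod{3}$, there are $\binom{d}{2}$ critical points of ${\mathcal{J}}_{d}$ with critical value $0$, $d(d-3)/6$ with value $8$, and $d(d-3)/3+1$ with value $-1$. The second catalogue is read off from the plane trees of Section~2: for each $t\in\{1,2,3\}$, the polynomial ${\mathcal{B}}^{(t)}_{d,\nu,\epsilon}$ has $s-1$ critical points of multiplicity $\nu$ with critical value $-1$, exactly one critical point of multiplicity $\nu$ with critical value $+1$, and (for $t=2,3$) one extra critical point of multiplicity $\epsilon$ with critical value $-1$. Because $\frac{1}{2}({\mathcal{B}}^{(t)}_{d,\nu,\epsilon}(w_0)+1)\in\{0,1\}$ at any critical point $w_0$, the matching constraint forces ${\mathcal{J}}_{d}(u_0,v_0)\in\{0,-1\}$, so the value-$8$ critical points of ${\mathcal{J}}_{d}$ contribute nothing. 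The value-$0$ critical points pair with the $s-1$ multiplicity-$\nu$ preimages of $-1$, and the value-$(-1)$ critical points pair with the unique multiplicity-$\nu$ preimage of $+1$; adding the two contributions yields exactly the count (3.7). The extra multiplicity-$\epsilon$ critical point, when present, only produces $A_\epsilon$ singularities and does not enter the $A_\nu$ tally.

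It remains to verify that each point produced above is analytically an $A_\nu$ singularity. The critical points of ${\mathcal{J}}_{d}$ are Morse, a property implicit in the structure used to prove Lemma 3.1 and fundamental to the arguments of \cite{esc18}; this gives a local expansion ${\mathcal{J}}_{d}(u,v)-{\mathcal{J}}_{d}(u_0,v_0)=Q(u-u_0,v-v_0)$ with $Q$ a non-degenerate quadratic form. At a critical point $w_0$ of multiplicity $\nu$, the derivative $({\mathcal{B}}^{(t)}_{d,\nu,\epsilon})'$ vanishes to order $\nu$, so
$${\mathcal{B}}^{(t)}_{d,\nu,\epsilon}(w)-{\mathcal{B}}^{(t)}_{d,\nu,\epsilon}(w_0)=c(w-w_0)^{\nu+1}+O\bigl((w-w_0)^{\nu+2}\bigr)$$
with $c\neq 0$. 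Substituting into $F$ and applying the splitting lemma identifies the germ with $U^2+V^2+W^{\nu+1}$, the standard $A_\nu$ normal form. The principal obstacle is confirming the Morse (non-degeneracy) property of the critical points of ${\mathcal{J}}_{d}$; once that is in hand, the rest is a direct combinatorial accounting tying Lemma 3.1 to the Section~2 tree data.
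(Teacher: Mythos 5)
Your proposal is correct and follows essentially the same route as the paper: count pairs of critical points of ${\mathcal{J}}_{d}$ and of ${\mathcal{B}}^{(t)}_{d,\nu,\epsilon}$ whose critical values are compatible with lying on the zero set, using Lemma 3.1 for the first factor and the tree data of Section 2 for the second, giving $N_{0}(\mathcal{J})(s-1)+N_{-1}(\mathcal{J})\cdot 1$. Your additional verification of the local $A_{\nu}$ normal form via the splitting lemma (and the Morse property of the critical points of ${\mathcal{J}}_{d}$) is a step the paper leaves implicit, but it does not change the argument.
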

 \begin{proof}
 We denote by $N_{\zeta}(\mathcal{J})$ the number of critical points of ${\mathcal{J}}_{d}(u,v)$ with critical value $\zeta$ and by $N_{\zeta}(\mathcal{B}, \rho)$ the number of critical points of ${\mathcal{B}}^{(t)}_{d,\nu,\epsilon}(w)$ with critical value $\zeta$ and multiplicity $\rho$. Then the number of singularities of type $A_{\nu}$ of the surfaces described by Eq. (3.6) is $N_{0}(\mathcal{J})N_{-1}(\mathcal{B}, \nu)+N_{-1}(\mathcal{J})N_{1}(\mathcal{B}, \nu)$.
 \par
According to Lemma 3.1, for $d=3q+\alpha, \alpha \in\{0,1,2\}, p \in {\Bbb{Z}}^{+}$, the polynomial ${\mathcal{J}}_{d}(u,v)$ has $N_{0}(\mathcal{J})={d \choose 2}$ critical points with critical value $\zeta=0$, $N_{8}(\mathcal{J})=(d^{2}-3d+2\lceil\frac{\alpha}{2}\rceil)/6$ critical points with $\zeta=8$, and $N_{-1}(\mathcal{J})=(d^{2}-\lceil\frac{\alpha}{2}\rceil)/3-d+1$ critical points with $\zeta=-1$ \cite{esc18}. 
 \par
The derivatives of ${\mathcal{B}}^{(t)}_{d,\nu,\epsilon}(w)$ ($s=k,b,p$ for $t=1,2,3$ respectively) are
 \begin{equation}
\frac{d {\mathcal{B}}^{(t)}_{d,\nu,\epsilon}(w)}{dw}=(w-w_{0})^{\nu}(w-w_{u})^{\epsilon}\prod_{l=1}^{s-1}(w-w_{l})^{\nu}
\end{equation}
with ${\mathcal{B}}^{(t)}_{d,\nu,\epsilon}(w_{l})=-1, l=1,2,...,s-1; {\mathcal{B}}^{(t)}_{d,\nu,\epsilon}(w_{0})=1$ and  ${\mathcal{B}}^{(t)}_{d,\nu,\epsilon}(w_{u})=-1$ if $\epsilon \neq 0$. Considering the results of Lemmas 2.1, 2.2 and 2.3  we have $N_{-1}(\mathcal{B}, \nu)=s-1$ for $s=k,b,p$ and  $N_{1}(\mathcal{B}, \nu)=1$. On the other hand, the number of critical points of ${\mathcal{J}}_{d}$ for $d=3q$ is $N_{0}(\mathcal{J})=\frac{d(d-1)}{2}, N_{8}(\mathcal{J})=\frac{d(d-3)}{6}, N_{-1}(\mathcal{J})=\frac{d(d-3)}{3}+1$, hence, we see that the number of type $A_{\nu}$ singularities of the surfaces with affine equations ${\mathcal{J}}_{d}(u,v)+({\mathcal{B}}^{(t)}_{d,\nu,\epsilon}(w)+1)/2=0$ is given by Eq. (3.7). In addition, when $\epsilon \neq 0$, they also have $\frac{d(d-1)}{2}$ singularities of type $A_{\epsilon }$, because $N_{-1}(\mathcal{B}, \epsilon)=1$ and  $N_{1}(\mathcal{B}, \epsilon)=0$. 
   \end{proof}
   
 The lower bounds for the maximal number of $A_\nu$ singularities given in \cite{lab06} are improved slightly. In that paper it is shown that there exists a polynomial $M^{j}_{d}(w)$ of degree $d$ with $\lfloor \frac{d}{j+1}\rfloor$ critical points of multiplicity $j$ with critical value $-1$ and $\lfloor \frac{d-1}{j}\rfloor-\lfloor \frac{d}{j+1}\rfloor$ critical points with critical value $+1$. If $F_{d}^{\bf{A}_{2}}(u,v)$ denotes the folding polynomial associated to the root lattice $\bf{A}_{2}$ obtained in \cite{wit88, hof88}, then the surfaces with affine equations $F_{d}^{\bf{A}_{2}}(u,v)+M^{j}_{d}(w)=0$ have $\frac{d(d-1)}{2}\lfloor \frac{d}{j+1}\rfloor+\frac{d(d-3)}{3}( \lfloor \frac{d-1}{j}\rfloor-\lfloor \frac{d}{j+1}\rfloor)$ singularities of type  $A_{j}$ when $d \equiv 0$ (mod 3).
\par 
The polynomials ${\mathcal{B}}^{(t)}_{d,\nu,\epsilon}(w), t=1,2,3$ have degrees $d$ and multiplicities $\nu$ satisfying 
         \begin{equation} 
 \lfloor \frac{d}{\nu+1}\rfloor=s-1,  \lfloor \frac{d-1}{\nu}\rfloor=s
\end{equation} 
for $s=k,b,p$, hence the surfaces ${\mathcal{J}}_{d}(u,v)+({\mathcal{B}}^{(t)}_{d,\nu,\epsilon}(w)+1)/2$  have one more $A_\nu$ singularity than $F_{d}^{\bf{A}_{2}}(u,v)+M^{\nu}_{d}(w)$.

    \section{Some explicit expressions in terms of classical Jacobi polynomials}
    In general Belyi polynomials can be computed only for low degree by using Groebner basis (see \cite{esc14a, esc14b} for some examples by using Singular computing tool \cite{gre08}). In this section we analyse some cases that can be obtained in terms of classical Jacobi polynomials $P_{k}^{l,m}(z)$ \cite{abr70}, \cite{sze75}. We extend the results in \cite{esc14a} by considering the three-parameter family of degree $d=a+(b-1)c$ polynomials 
   \begin{equation}    
     G_{a,b,c}(w)=z^{a}P_{b-1}^{a/c,-b}(1-2w)^{c}
   \end{equation} 
   One of the zeros of $\frac{dG_{a,b,c}}{dw}$ with critical value $\zeta=0$ has order $\nu=a-1$, there are also $b-1$ zeros with critical value $\zeta=0$ and $\nu=c-1$, while the unique remaining root $w=1$ has $\nu=b-1$ with critical value $\zeta=1$. The general form of the trees for $G$ can be seen in Fig. 4 (right).  The polynomials used in this work have $c=b$. We now show that some polynomials in Lemmas 2.1 and 2.2 can be described in terms of $G_{a,b,c}(w)$.  No explicit expressions have been found for ${\mathcal{B}}^{(3)}_{d,\nu, \epsilon}(w)$ in terms of classical Jacobi polynomials (see Fig. 4).
 \bigskip\par    
 For the polynomials in Lemma 2.1 corresponding to $n=0$ in Eq.(2.1) we have that
   \begin{equation} 
   {\mathcal{B}}^{(1)}_{9m^2,3m-1}(w)=G_{3m,3m,3m}(w)=w^{3m}P_{3m-1}^{1,-3m}(1-2w)^{3m}
     \end{equation}  
        has $w=0$ with $\zeta=0, (\nu=3m-1)$, $w=1$ with $\zeta=1, (\nu=3m-1)$ and $3m-1$ points with $\zeta=0, (\nu=3m-1)$.      
 \bigskip\par\noindent 
   {\it Example 4.1.} $G_{3,3,3}(w)$  was already considered in \cite{esc14a}, Prop. 2, for constructing a nonic surface with 127 cusps, but the  surface is not defined over the rationals, by contrast with ${\mathcal{J}}_{9}(u,v)+({\mathcal{B}}^{(1)}_{9,2}(w)+1)/2$. The polynomial 
   $${\mathcal{B}}^{(1)}_{36,5}(w)=G_{6,6,6}(w)=w^6 (6 - 15 w + 20 w^2 - 15 w^3 + 6 w^4 - w^5)^6$$ 
   has critical points $w=0, (\nu=5), w=2, (\nu=5), w=\frac{1 \pm i {\sqrt{3}}} {2},  (\nu=5), w=\frac{3 \pm i {\sqrt{3}}}{2}, (\nu=5),$ with critical value $\zeta=0$  and $w=1, (\nu=5)$ with $\zeta=1$. The surface ${\mathcal{J}}_{36}(u,v)+({\mathcal{B}}^{(1)}_{36,5}(w)+1)/2$ is also defined over ${\Bbb{Q}}$ and has 4177 complex singularities $A_{5}$ .
 \bigskip\par 
The polynomials corresponding to the initial trees in Lemma 2.2 can be expressed as 
   \begin{equation} 
 {\mathcal{B}}^{(2)}_{a+b(b-1),b-1,a-1}(w)= G_{a,b,b}(w)= w^{a}P_{b-1}^{a/b,-b}(1-2w)^{b}
     \end{equation}  
   for $a=1, 3, 4, 6, 7, 9, 10, ..., 3m, 3m+1,...$ and $b>a$.  In particular:
  \par   
  1) $ {\mathcal{B}}^{(2)}_{3m+3l(3l-1),3l-1,3m-1}(w)= G_{3m,3l+1,3l+1}(w)$, $m\in {\Bbb{Z}}^{+}, l=m, m+1, m+2,...$.     \par
  2) $ {\mathcal{B}}^{(2)}_{3m+1+(3l+2)(3l+1),3l+1,3m}(w)= G_{3m+1,3l+2,3l+2}(w)$, $m\in {\Bbb{Z}}^{\geq 0}, l=m, m+1, m+2,...$. 
    
   \bigskip\par\noindent 
   {\it Example 4.2.} 
     \par
     1)  The lower bound $\mu_{A_{3}}(15)\ge 376$ was obtained in \cite{esc14a}. An explicit expression of ${\mathcal{B}}^{(2)}_{15,3,2}(w)= G_{3,4,4}(w)$ is given in the proof of Prop. 4. The surface is defined over ${\Bbb{C}}$, whereas ${\mathcal{J}}_{15}(u,v)+({\mathcal{B}}^{(2)}_{15,3,2}(w)+1)/2$ is defined over ${\Bbb{Q}}$, and both have 376  $A_{3}$ and 105 $A_{2}$ singularities.
     
          \begin{figure}[h]
 \includegraphics[width=15pc]{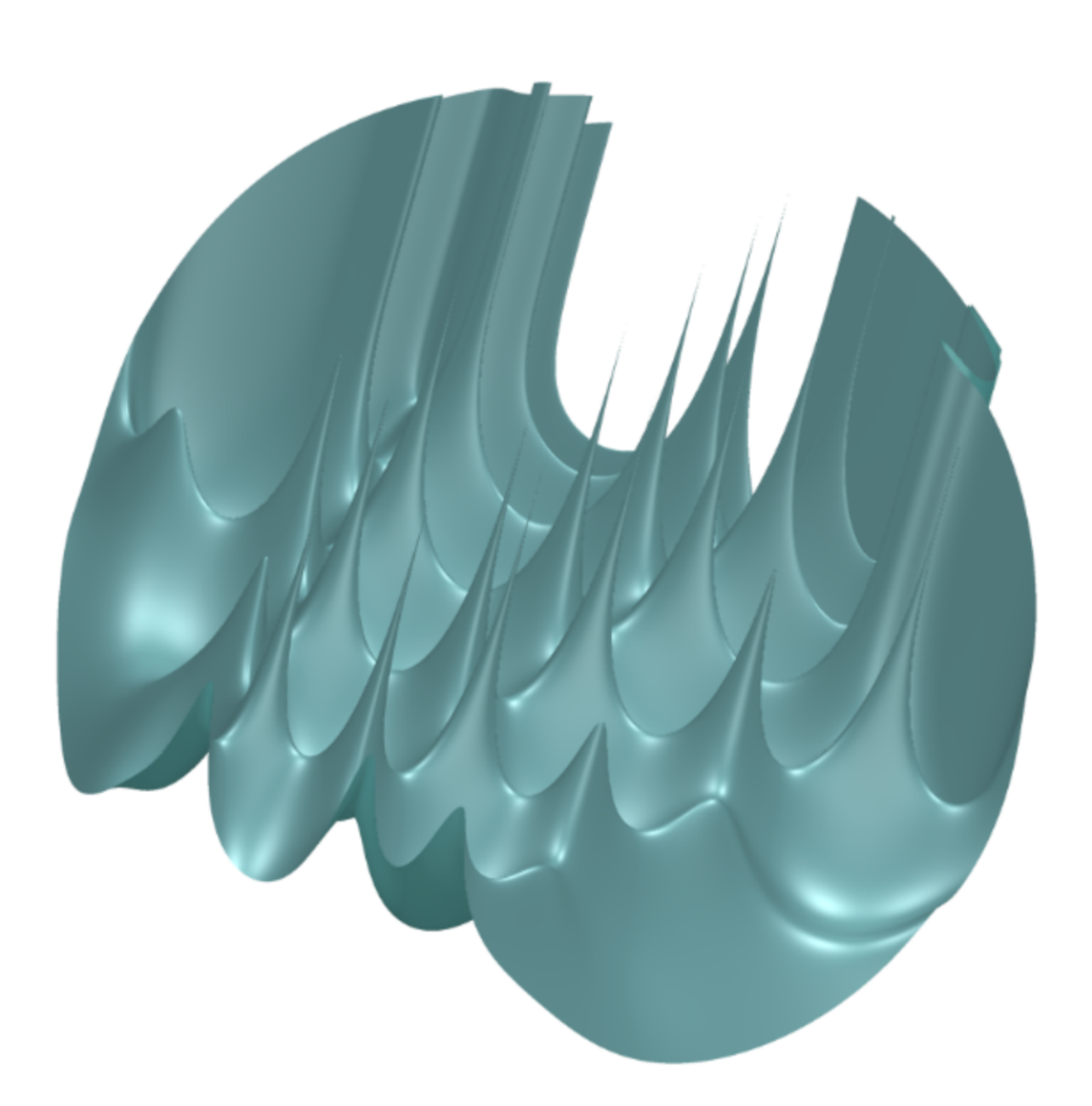}
\caption{\label{label} A fragment of the real part of a degree 21 surface with $A_{4}$ singularities.}
\end{figure}

     \par
 2)   ${\mathcal{B}}^{(2)}_{3,1,0}(w)=G_{1,2,2}(w)$ is associated with the starting tree of the series with $d=6n+3, \nu=3n+1$, and the associated surface has the same number of nodes as the Cayley cubic surface. This case is studied  in Prop. 3.6 in \cite{esc14b} where we derived the lower bound given by Eq (1.3). Explicit expressions for 
 ${\mathcal{B}}^{(2)}_{d,1,0}(w), d=9,15$ were presented in \cite{esc14a, esc14b} by using Groebner basis, and the surfaces obtained give 
 $\mu^{(\Bbb{R})}_{A_{4}}(9)\ge 55$  and  $\mu^{(\Bbb{R})}_{A_{7}}(15)\ge 166$. Another particular case is  $${\mathcal{B}}^{(2)}_{21,4,0}(w)= G_{1,5,5}(w)=\frac{w}{5^{20}} (924 - 616 w + 504 w^2 - 231 w^3 + 44 w^4)^5$$ 
which gives the degree 21 surface ${\mathcal{J}}_{21}(u,v)+({\mathcal{B}}^{(2)}_{21,4,0}(w)+1)/2$, with 757 complex singularities $A_{4}$. A representation of a fragment of this surface real part with the {\it Surfer} visualisation tool can be seen in Fig. 6.

\newpage

\par

\end{document}